\numberwithin{figure}{section}
\numberwithin{table}{section}
\numberwithin{equation}{section}
\newcommand{\E}{\mathbb E}
\newcommand{\cL}{\mathcal L}
\newcommand{\Z}{\mathbb Z}
\newtheoremstyle{mythm}{18pt}{0pt}{\itshape}{}{\bfseries}{.}{12pt}{}
\newtheoremstyle{mydefn}{18pt}{0pt}{}{}{\bfseries}{.}{12pt}{}
\theoremstyle{mythm}
\newtheorem{theorem}{Theorem}[section]
\newtheorem{lemma}[theorem]{Lemma}
\newtheorem{proposition}[theorem]{Proposition}
\newtheorem{corollary}[theorem]{Corollary}
\theoremstyle{mydefn}
\newtheorem{remark}[theorem]{Remark}
\def\Pr{\mathbb{P}}
\newcommand{\NB}{{\rm NB}}
\newcommand{\Var}{{\rm Var}}
\def\Ref#1{(\ref{#1})}
\newcommand{\eqs}{\begin{eqnarray*}}
\newcommand{\ens}{\end{eqnarray*}}
\def\l{\lambda}
\newcommand{\beas}{\begin{eqnarray*}}
\newcommand{\enas}{\end{eqnarray*}}
\newcommand{\eqa}{\begin{eqnarray}}
\newcommand{\ena}{\end{eqnarray}}
\newcommand{\eq}{\begin{equation}}
\newcommand{\en}{\end{equation}}
\def\ind{{\bf 1}}
\def\l{\lambda}
\def\ignore#1{}
\def\Po{{\rm Po}}
\def\l{\lambda}
\def\ed{\stackrel d=}
\def\ton{^{[0,n]}}
\renewcommand\theequation{\thesection.\@arabic\c@equation}
\def\ind{\bm{1}}  
\def\E{\mathbb{E}} 
\def\Pr{\mathbb{P}} 
\def\Z{\mathbb{Z}}
\def\Ref#1{(\ref{#1})}
\newcommand{\Nb}{\text{NB}}
\begin{document}
\parindent 0cm
\parskip .5cm
\title{Improving approximation error bounds via truncation}
\author{H.~L.~Gan\footnote{Mathematics Department,
Northwestern University,
2033 Sheridan Road,
Evanston, IL 60208, USA. E-mail: ganhl@math.northwestern.edu}}
\maketitle
\begin{abstract}
One aspect of Poisson approximation is that the support of the random variable of interest is often finite while the support of the Poisson distribution is not. In this paper we will remedy this by examining truncated negative binomial (of which Poisson is a special limiting case) approximation, so as to match the two supports of both distributions, and show that this will lead to improvements in the error bounds of the approximation.
\end{abstract}
\vskip12pt \noindent\textit {Keywords and phrases\/}: Negative binomial approximation, Poisson approximation, truncation, Stein's method, Stein's factors.

\vskip12pt \noindent\textit {AMS 2010 Subject Classification\/}: Primary 60F05
\section{Introduction}
The classical law of small numbers states that the binomial distribution Bi$(n,p)$ converges to the Poisson distribution Po$(\l)$ as $n \to \infty$ and $p = \l/n$. This naturally leads to the Poisson distribution being used as an approximation for the binomial distribution when $n$ is large and $p$ (or $1-p$) is small. Evaluation of the accuracy of such an approximation has been studied in depth over the years, see the introduction of \cite{BHJ92} for a short discussion. There is however one property of the Poisson distribution that is somewhat not ideal for such an approximation. That is, the Poisson distribution has support upon all non-negative integers, whereas the binomial distribution has support on non-negative integers less than or equal to $n$. As a simple correction, we could truncate the Poisson distribution and consider using the Poisson distribution \emph{conditional upon being less than or equal to $n$}.

This paper will focus on Stein's method for negative binomial approximation (of which Poisson approximation is special limiting case), truncated to be conditional upon being less than or equal to $n$. We will formulate the framework for truncated negative binomial approximation, defining an appropriate Stein equation and deriving bounds for the relevant Stein factors. Our main result is that given the random variable of interest has a finite support, a Stein's method bound for truncated distribution approximation will yield a strictly better bound than the usual approximation. In section 2 we will set up the framework for truncated distribution approximation with Stein's method, and state the main results. The proofs will follow in section 3.

\section{Main results}
Negative binomial approximation with Stein's method has been well studied in the past, see for example~\cite{BP99} for approximation bounds in total variation distance and~\cite{BGX16} for the Wasserstein distance. We will use the following definition of the negative binomial distribution. A random variable $Z$ follows the negative binomial distribution $\Nb(r,p)$ if it has mass function given by
\[ P(Z = k) = \frac{\Gamma(r+k)}{\Gamma(r) k!}(1-p)^r p^k, \ \ k \in \mathbb{Z}_+ := \{0,1,...\}; r > 0, 0 < p < 1.\]
Let $Z\ton$ denote a random variable conditioned upon being less than or equal to $n$, that is $\cL(Z\ton)= \cL(Z | Z \leq n)$, and analogously define $\NB\ton(r,p)$. Using similar ideas to~\cite{GX15}, a Stein identity for the truncated negative binomial distribution is as follows.
\begin{lemma}
For a non-negative integer $n$, $W \ed \NB\ton(r,p)$ if and only if for for all bounded functions $g$ on $\mathbb{Z}\ton :=\{0,1,\ldots, n\}$
\begin{align} \E\left[ p(r+W) g(W+1) \ind_{W < n} - W g(W) \right] = 0.\label{steinid}\end{align}
\end{lemma}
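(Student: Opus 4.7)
The plan is to prove the equivalence in the standard way for Stein identities: establish the ``only if'' direction by a direct computation from the mass function, and establish the ``if'' direction by plugging in a family of characteristic test functions indexed by the support of the truncated distribution.

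For the necessity direction I would assume $W \sim \NB\ton(r,p)$, whose mass function is $\Pr(W=k) = C_n^{-1}\frac{\Gamma(r+k)}{\Gamma(r)k!}(1-p)^r p^k$ on $\{0,\ldots,n\}$, where $C_n = \Pr(Z \le n)$ for $Z \sim \NB(r,p)$. I would expand $\E[p(r+W)g(W+1)\ind_{W<n}]$ as a finite sum over $k=0,\ldots,n-1$, then use $(r+k)\Gamma(r+k) = \Gamma(r+k+1)$ to absorb the factor $p(r+k)$ into the Gamma function while converting $p^k$ into $p^{k+1}$, and finally substitute $j=k+1$. After multiplying and dividing by $j$, the sum rewrites as $\sum_{j=1}^n j\, g(j)\Pr(W=j) = \E[Wg(W)]$, which is the desired identity.

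For the sufficiency direction, assume the identity holds for every bounded $g$ on $\{0,\ldots,n\}$ and that $W$ takes values in $\{0,\ldots,n\}$ (as is implicit for $g(W)$ to be well defined). For each $j \in \{0,1,\ldots,n-1\}$ I would plug in $g = \ind_{\{j+1\}}$. Since the indicator $\ind_{W<n}$ is automatic on the event $\{W=j\}$ whenever $j \le n-1$, the identity collapses to the one-step recurrence
\[ p(r+j)\,\Pr(W=j) \;=\; (j+1)\,\Pr(W=j+1). \]
Iterating from $\Pr(W=0)$ yields $\Pr(W=k) \propto \frac{\Gamma(r+k)}{\Gamma(r)k!} p^k$ on $\{0,\ldots,n\}$, and the normalization $\sum_{k=0}^n \Pr(W=k)=1$ pins the constant of proportionality to $(1-p)^r/C_n$, matching the $\NB\ton(r,p)$ mass function exactly.

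I do not expect any substantial obstacle here. The only care required is to correctly account for the upper boundary at $k=n-1$ in the necessity sum and to keep the indicator $\ind_{W<n}$ aligned with the re-indexing $j=k+1$; this is precisely where the truncated identity departs subtly from the classical untruncated negative binomial Stein identity, and confirming that everything remains self-consistent at the truncation point is the one piece of bookkeeping worth double-checking in the write-up.
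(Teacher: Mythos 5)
Your proof is correct and takes exactly the approach of the paper, whose own proof is just a one-line sketch (``direct computation'' for the forward direction, indicator test functions $g_i = \ind_{\{i\}}$ for the backward direction); your write-up is simply that sketch carried out in full, including the correct handling of the boundary term at $k=n-1$ and the normalization step.
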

\begin{proof}
The forward implication is true by direct computation and the backward implication can be checked by considering test functions $g_i(x) = \ind_{\{i\}}(x)$ and solving for the probabilities.
\end{proof}
We define the solution to the Stein equation $g_f$ to be the function that for a given function $f$ satisfies 
\begin{align} p(r+i)g_f(i+1) \ind_{i<n} - i g_f(i) = f(i) - \NB\ton(r,p) \{f\}, \ \  f \in \mathcal{F},\label{steineq}\end{align}
for some rich enough family of functions $\mathcal F$ and where $\NB\ton(r,p) \{f\} = \E f(Z)$ for $Z \ed \NB\ton(r,p)$. Then for a random variable $W$ on $\mathbb Z\ton$,
\begin{align} \E f(W) - \NB\ton(r,p)\{f\} = \E \left[ p(r+W)g_f(W+1) \ind_{W<n} - W g_f(W)\right]. \label{steineq2}\end{align}
By choosing $\mathcal F = \mathcal F_{TV} := \{ f : f(x) = \ind_A(x) , A \subset \mathbb{Z}\ton\}$, if we can find a uniform bound for the right hand side of \Ref{steineq2} over all $f \in \mathcal F_{TV}$ this corresponds to a bound upon the total variation distance between $\cL(W)$ and $\NB\ton(r,p)$ where the total variation distance between two distributions $P$ and $Q$ with support on $\mathbb{Z}\ton$ is defined as
\[ d_{TV}(P,Q) = \sup_{f \in \mathcal{F}_{TV}} \left| \int f dP - \int f dQ\right| = \frac12 \sum_{i=0}^n \left| P\{i\} - Q\{i\} \right|.\]
To apply Stein's method, a key ingredient is to have bounds of the right order upon the first difference of $g_f$. That is we need 
\[ G_2\ton := \sup_{f \in \mathcal{F}_{TV}} \sup_{i \in \Z\ton} | g_f(i+1) - g_f(i)|.\]
\begin{lemma}\label{steinfactor}
For any $n$,
\begin{align}
G_2\ton \leq \frac{1- \pi_0}{pr} = \frac{\Pr(1 \leq Z \leq n)}{pr\Pr(Z \leq n)}, \label{factorbound}
\end{align}
where $Z \ed \NB(r,p)$ and $\pi_0 = \Pr(Z\ton = 0) = \Pr(Z=0) / \Pr(Z \leq n)$. Furthermore, this bound is sharp and increasing in $n$.
\end{lemma}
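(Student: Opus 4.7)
The plan is to solve the Stein equation \Ref{steineq} explicitly, then to show that the supremum $G_2\ton$ is attained at the boundary $i = 0$, which simultaneously gives both the bound and its sharpness; the monotonicity in $n$ is immediate from the closed form.

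To solve \Ref{steineq}, I would multiply the equation by $\pi_k$ and telescope, using the detailed balance identity $(k+1)\pi_{k+1} = p(r+k)\pi_k$ (a consequence of the NB mass recursion $\pi_{k+1}/\pi_k = p(r+k)/(k+1)$). The telescoping yields, for $0 \leq k \leq n-1$,
\begin{align*}
g_f(k+1) = \frac{1}{(k+1)\pi_{k+1}}\sum_{i=0}^{k} \pi_i \bigl(f(i) - \NB\ton(r,p)\{f\}\bigr),
\end{align*}
while $g_f(0)$ is free and I would fix $g_f(0) = 0$ by convention. Evaluating at $k=0$ with $f = \ind_{\{0\}}$, and using $\pi_1 = pr\,\pi_0$, produces $g_f(1) - g_f(0) = (1-\pi_0)/(pr)$; this both establishes sharpness and yields the matching lower bound $G_2\ton \geq (1-\pi_0)/(pr)$.

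For the upper bound, I would exploit linearity of the Stein equation in $f$: for $A \subseteq \mathbb Z\ton$, $g_{\ind_A} = \sum_{j \in A} g_{\ind_{\{j\}}}$, so $g_A(i+1) - g_A(i) = \sum_{j \in A} D_j(i)$ with $D_j(i) := g_{\ind_{\{j\}}}(i+1) - g_{\ind_{\{j\}}}(i)$. Since the constant test function forces $g \equiv 0$, one has $\sum_{j=0}^n D_j(i) = 0$, and therefore
\begin{align*}
\sup_{A \subseteq \mathbb Z\ton} |g_A(i+1) - g_A(i)| \,=\, \sum_{j : D_j(i) > 0} D_j(i).
\end{align*}
Using the closed form above, I would compute $D_j(i)$ in the three cases $j < i$, $j = i$, $j > i$, collect the positive contributions, and reduce the result (via the mass recursion) to the scalar bound $(1-\pi_0)/(pr)$. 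Finally, monotonicity in $n$ follows from the representation $(1-\pi_0)/(pr) = (1/(pr))\bigl(1 - (1-p)^r/\Pr(Z \leq n)\bigr)$ and the fact that $\Pr(Z \leq n)$ is non-decreasing in $n$.

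The main obstacle is the upper bound for $i \geq 1$: the signs of $D_j(i)$ for $j \neq i$ are governed by the monotonicity of the ratios $(1-P_k)/[(k+1)\pi_{k+1}]$ and $P_k/[(k+1)\pi_{k+1}]$ in $k$, which need not hold globally for all $(r,p)$, so one cannot simply reduce the worst-case set to $A = \{i\}$ and the positive-part sum may need to be evaluated by a careful region-by-region decomposition before being compared to the boundary value $(1-\pi_0)/(pr)$ attained at $i=0$.
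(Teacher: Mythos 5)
Your explicit telescoped solution $g_f(k+1) = \frac{1}{(k+1)\pi_{k+1}}\sum_{i=0}^{k}\pi_i\bigl(f(i)-\NB\ton(r,p)\{f\}\bigr)$ is correct, and the resulting sharpness computation at $i=0$ (giving $g_{\ind_{\{0\}}}(1)-g_{\ind_{\{0\}}}(0) = (1-\pi_0)/(pr)$) is a valid, self-contained alternative to the paper's argument, which instead uses the probabilistic representation $h_f(i) = -\int_0^\infty[\E f(Z_i\ton(t)) - \NB\ton(r,p)\{f\}]\,dt$ for the associated birth--death process and the expected first transition time from state $0$. The monotonicity-in-$n$ argument is also fine and coincides with the paper's.

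However, there is a genuine gap: the actual content of the lemma is the upper bound $G_2\ton \le (1-\pi_0)/(pr)$ for \emph{all} $i$, and your proposal does not establish it. You correctly reduce $\sup_A|g_A(i+1)-g_A(i)|$ to the positive-part sum $\sum_{j: D_j(i)>0} D_j(i)$, but you then concede that the sign analysis of the $D_j(i)$ ``need not hold globally for all $(r,p)$'' and that the positive-part sum ``may need to be evaluated by a careful region-by-region decomposition'' --- i.e.\ the decisive step is left as an acknowledged obstacle rather than carried out. The paper resolves exactly this point by verifying that the birth--death rates $\alpha_i = p(r+i)\ind_{i<n}$, $\beta_i = i$ satisfy condition (C4) of Lemma~2.4 in \cite{BX01}, whence Theorem~2.10 of \cite{BX01} gives the closed form $\sup_f|\Delta g_f(i)| = \frac{\pi_{i+1}+\cdots+\pi_n}{p(r+i)} + \frac{\pi_0+\cdots+\pi_{i-1}}{i}$; and even after that, a further nontrivial chain of inequalities (using the detailed-balance recursion $(j+1)\pi_{j+1}=p(r+j)\pi_j$ term by term) is needed to dominate this expression by $(1-\pi_0)/(pr)$. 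Neither the identification of the worst-case set nor the subsequent comparison to the boundary value appears in your proposal, so as written it proves only $G_2\ton \ge (1-\pi_0)/(pr)$, not the inequality asserted in \Ref{factorbound}.
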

If we take $p \to 0$, $r \to \infty$ and $pr \to \l$, the negative binomial distribution limits to the Poisson distribution, hence we also immediately get the following.
\begin{corollary}
For Poisson approximation, for any $n$
\begin{align} G_2\ton = \frac{1-\pi_0}{\l} = \frac{\Pr(1 \leq Z \leq n)}{\l\Pr(Z \leq n)},\end{align}
where $Z \ed \Po(\l)$ and $\pi_0 = \Pr(Z\ton = 0) = \Pr(Z=0) / \Pr(Z \leq n)$. Furthermore, this bound is sharp and increasing in $n$.
\end{corollary}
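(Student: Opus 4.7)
The plan is to deduce the corollary from Lemma \ref{steinfactor} by passing to the Poisson limit $p\to 0$, $r\to\infty$ with $pr\to\lambda$. In this regime $\NB(r,p)\Rightarrow \Po(\lambda)$, and since conditioning on the finite set $\{0,1,\ldots,n\}$ is continuous for distributions with full support there, we also have $\NB\ton(r,p)\Rightarrow \Po\ton(\lambda)$ in total variation on $\Z\ton$.

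First I would take the formal limit of the Stein equation \Ref{steineq}. Replacing $p(r+i)$ by $\lambda$ (valid as $p\to 0,\ r\to\infty,\ pr\to\lambda$, with $i\le n$ bounded) yields
$$\lambda\, g_f(i+1)\,\ind_{i<n} - i\, g_f(i) = f(i) - \Po\ton(\lambda)\{f\}, \qquad i\in\Z\ton,$$
which is the Stein equation characterizing $\Po\ton(\lambda)$ by the obvious analogue of Lemma 2.1. Because this is a finite linear recursion on the finite set $\{0,1,\ldots,n\}$, the solution $g_f$ is an explicit continuous function of the coefficients $p(r+i)$ and the probabilities $\Po\ton(\lambda)\{f\}$; thus, for each $f\in\cF_{TV}$ and each $i\in\Z\ton$, the NB solution converges pointwise to the truncated Poisson solution.

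Second I would take the limit of the bound in \Ref{factorbound}. The quantity $1-\pi_0 = \Pr(1\le Z\le n)/\Pr(Z\le n)$ is a continuous function of the law of $Z$ on $\{0,\ldots,n\}$, so the NB value converges to the Poisson value; combined with $pr\to\lambda$, this gives
$$\frac{1-\pi_0^{\NB}}{pr}\ \longrightarrow\ \frac{1-\pi_0^{\Po}}{\lambda}.$$
Since $\cF_{TV}$ is finite (indicators of subsets of the finite set $\Z\ton$) and $\Z\ton$ is finite, the sup defining $G_2\ton$ is a maximum of finitely many quantities, each converging; hence the NB value of $G_2\ton$ converges to the Poisson value of $G_2\ton$. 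Combined with the sharpness in Lemma \ref{steinfactor}, this upgrades the bound to the stated equality in the Poisson case.

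Sharpness and monotonicity transfer automatically: the extremal pair $(f^\star,i^\star)$ from the NB case achieves the bound for each $(r,p)$ and its limit achieves the Poisson bound, while monotonicity $n\mapsto G_2\ton$ is preserved under the pointwise limit. The only real bookkeeping issue is justifying the interchange of limits, but this is trivial here because every relevant space (the test-function class, the state space, and the parameter family defining $g_f$) is finite-dimensional, so no uniformity argument is needed.
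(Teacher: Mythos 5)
Your proposal is correct and takes essentially the same route as the paper: the corollary is obtained from Lemma~\ref{steinfactor} precisely by the limit passage $p\to0$, $r\to\infty$, $pr\to\lambda$, which the paper asserts as immediate and which you have justified in detail via finite-dimensional continuity of the Stein solutions and the bound. No gap.
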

\begin{remark}
Note that as $n \to \infty$ the above yield exactly the same bound as the existing bounds for the negative binomial Stein factor $\frac{1-p^r}{rp}$ in Lemma 5 of~\cite{BP99},  and similarly for the Poisson equivalent $\frac{1-e^{-\l}}{\l}$ in Corollary~2.12 in~\cite{BX01}.
\end{remark}
A result of Lemma \ref{steinfactor} yields that for a bounded random variable, given there exists a Stein's method type bound for (unconditional) negative binomial or Poisson approximation, a truncated approximation will yield a better error bound.
\begin{theorem}\label{mainthm}
Let $W$ be a random variable that has support on $\mathbb Z\ton$. If it can be shown that there exists a constant $C$ such that
\begin{align*}
\E\left[ p(r+W) g(W+1) - W g(W) \right] \leq C G_2^{[0,\infty)},
\end{align*}
then
\begin{align}
d_{TV}(\cL(W), \NB\ton(r,p)) \leq C G_2\ton \leq  C G_2^{[0,\infty)}.
\end{align}
\end{theorem}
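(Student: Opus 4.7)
The plan is to invoke the truncated Stein identity \Ref{steineq2} for $W$ and rewrite its right-hand side as an expectation involving the \emph{untruncated} Stein operator, so that the hypothesis can be applied directly but with the improved Stein factor $G_2\ton$ in place of $G_2^{[0,\infty)}$.

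First, fix $f \in \mathcal{F}_{TV}$ and let $g_f$ be the truncated Stein solution from \Ref{steineq}. The recursion \Ref{steineq} imposes no constraint on $g_f$ at arguments strictly greater than $n$, so I extend $g_f$ to all of $\mathbb{Z}_+$ by setting $g_f(n+1) := 0$. Because $W$ is supported on $\{0,1,\ldots,n\}$, the two Stein operators agree almost surely:
\[
p(r+W)g_f(W+1)\ind_{W<n} - Wg_f(W) \;=\; p(r+W)g_f(W+1) - Wg_f(W),
\]
since the only value of $W$ at which the indicator switches off is $W=n$, and there the extension forces $g_f(n+1)=0$. Combined with \Ref{steineq2}, this reduces $\E f(W) - \NB\ton(r,p)\{f\}$ to the expectation of the untruncated Stein operator evaluated at the extended $g_f$.

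Next, I invoke the hypothesis. The hypothesis furnishes a bound of the form $C\,G_2^{[0,\infty)}$ on the untruncated Stein operator applied to $W$; the structural constant $C$ reflects properties of $W$ (couplings, exchangeable pairs, local dependence, and so on), while $G_2^{[0,\infty)}$ enters only as a uniform upper bound on the first difference $|g(i+1)-g(i)|$ of the Stein solution. The same structural derivation is indifferent to whether $g$ solves the truncated or the untruncated Stein equation: it requires only a bounded function on $\mathbb{Z}_+$ with controlled first differences. Substituting the extended $g_f$ therefore yields
\[
\bigl|\E\bigl[p(r+W)g_f(W+1) - Wg_f(W)\bigr]\bigr| \;\leq\; C\,\sup_{0\le i\le n}|g_f(i+1)-g_f(i)| \;\leq\; C\,G_2\ton,
\]
where the last step is Lemma~\ref{steinfactor} together with the convention $g_f(n+1)=0$. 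Taking a supremum over $f\in\mathcal{F}_{TV}$ gives $d_{TV}(\cL(W),\NB\ton(r,p)) \leq C\,G_2\ton$, and the remaining inequality $G_2\ton \leq G_2^{[0,\infty)}$ is precisely the monotonicity in $n$ asserted in Lemma~\ref{steinfactor}.

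The delicate step is this transfer: one has to verify that the hypothesis's derivation actually uses only first differences of $g$ at arguments attainable as $W$ or small perturbations thereof, rather than pointwise values of $g$, so that substituting the extended $g_f$ replaces $G_2^{[0,\infty)}$ cleanly by $G_2\ton$ with no stray boundary terms at $n+1$. For the standard Stein couplings (size-biasing, local dependence, Stein--Chen, and so on) this is automatic, and it is precisely what makes truncation always deliver a strictly better error bound whenever $W$ has finite support.
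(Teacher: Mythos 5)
Your proof is correct and shares the paper's overall architecture --- isolate the boundary discrepancy $\E[p(r+W)g(W+1)\ind_{W=n}]$, remove it by arranging $g_f(n+1)=0$, transfer the structural constant $C$ to the truncated solution, and close with Lemma~\ref{steinfactor} --- but you handle the crux differently. The paper spends essentially its whole proof \emph{deriving} $g_f(n+1)=0$ for the canonical solution: writing $g_f(i)=h_f(i)-h_f(i-1)$ with $h_f$ given by the birth--death generator representation, it notes that the process started at $n+1$ has birth rate zero there, so $h_f(n+1)-h_f(n)=-\E\tau\cdot(f(n+1)-\pi_{n+1})=0$. You instead observe that the recursion \Ref{steineq} never references $g_f(n+1)$ and impose $g_f(n+1):=0$ by fiat. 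This is a legitimate and more elementary route, with one caveat you should make explicit: your step $\sup_{0\le i\le n}|g_f(i+1)-g_f(i)|\le G_2\ton$ at the index $i=n$ needs the value of $g_f(n+1)$ appearing in the definition of $G_2\ton$ --- and in the Brown--Xia computation behind Lemma~\ref{steinfactor}, which is carried out for the generator solution, where $g_f(n+1)$ is \emph{not} a free parameter --- to coincide with your convention. That coincidence is precisely what the paper's birth--death calculation certifies, so your convention is consistent rather than arbitrary, but a sentence acknowledging this dependence would close the loop. On the transfer of $C$, you are actually more careful than the paper: both arguments implicitly read the hypothesis as saying that $C$ arises from a derivation using only first differences of $g$, and you flag this assumption explicitly where the paper leaves it tacit.
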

It should be noted that one of the uses for Stein's method is to calculate explicit bounds for rates of convergence. The approach in this paper is therefore not useful in this regard given our reference distribution also depends upon $n$. However if the purpose is strictly for obtaining the most accurate approximation, then Theorem~\ref{mainthm} shows that this approach leads to an improvement in existing error bounds.

Although Theorem~\ref{mainthm} shows that for finite random variables the approximation bounds will be improved by using truncated versions of the approximating distribution, the difference is going to be negligible when $n$ is much larger than $\lambda$. As an example, for the law of small numbers $n$ is typically very large, and $p$ is small, meaning that $\l$ is much smaller than $n$ and the effect of truncating is essentially unnoticeable. In the following we present a toy example where $n$ and $\l$ could be similar, hence truncation would offer noticeable improvements. 

Consider a machine that will develop a fault on any given day independently with probability $p$. When a fault occurs, the machine takes $R$ days to repair before it returns to service. Over a period of $N$ days, let $W$ denote the number of times a fault occurs assuming it starts in a working state. In this case an appropriate choice for $n$ would be $n = \lfloor{N/R}\rfloor +1$, and depending upon the choice of parameters, $\l$ could be close to $n$.
\begin{proposition}\label{example}
In the above set up, if we set $\l = \E(W)$, then,
\[ d_{TV}(\mathcal L(W), \Po\ton(\l)) \leq \l G_2\ton \left[ 1- \frac{\Var \ W}{\l} \right],\]
and this bound is of order $p$.
\end{proposition}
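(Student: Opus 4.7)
The plan is to apply the truncated Poisson Stein framework of the Corollary to Lemma~\ref{steinfactor} and then follow a Chen--Stein--style local coupling argument adapted to the fault process. Write $W=\sum_{i=1}^{N}X_i$ with $X_i=\ind(\text{fault on day }i)$ and $p_i=\E X_i$, so $\l=\sum_i p_i$. For each $i$, let $\tilde W_i$ have the conditional law $\cL(\sum_{j\neq i}X_j\mid X_i=1)$. The Corollary yields
\[d_{TV}(\cL(W),\Po\ton(\l))\leq\sup_{f\in\mathcal{F}_{TV}}\bigl|\E[\l g_f(W+1)\ind_{W<n}-W g_f(W)]\bigr|,\]
with $|\Delta g_f|\leq G_2\ton$; and the size-bias identity $\E[Wg(W)]=\sum_i p_i\E[g(\tilde W_i+1)]$ combined with $\l=\sum_i p_i$ rewrites this Stein expression as $\sum_i p_i\,\E[g(W+1)\ind_{W<n}-g(\tilde W_i+1)]$.

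The core step is to realise each $\tilde W_i$ on the same probability space as $W$ with $\tilde W_i\leq W-X_i$ a.s. Such a monotone coupling exists because the repair window induces negative dependence: conditioning on $X_i=1$ deterministically zeros out $X_{i+1},\ldots,X_{i+R}$ and stochastically suppresses every other fault indicator through the renewal structure, so one may couple $\tilde X_j^{(i)}\leq X_j$ for every $j\neq i$. Moreover, with $n=\lfloor N/R\rfloor+1$ the $R$-day spacing forces $W<n$ a.s., so $\ind_{W<n}=1$ almost surely and the indicator drops from the identity. Then
\[\Bigl|\sum_i p_i\,\E[g(W+1)-g(\tilde W_i+1)]\Bigr|\leq G_2\ton\sum_i p_i\,\E[W-\tilde W_i],\]
and the right-hand side simplifies via $\sum_i p_i\,\E\tilde W_i=\sum_{i\neq j}\Pr(X_i=X_j=1)=\E[W(W-1)]$ together with the algebraic identity $\l-\Var W=\l^2-\E[W(W-1)]$ to exactly $G_2\ton(\l-\Var W)=\l G_2\ton(1-\Var W/\l)$, as claimed.

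For the order-$p$ statement, the Corollary gives $G_2\ton\leq 1/\l$, so it suffices to show $\l-\Var W=O(Np^2)$. Expanding $\l-\Var W=\sum_i p_i^2+\sum_{i\neq j}[p_ip_j-\Pr(X_iX_j=1)]$, the pairs with $|i-j|\leq R$ contribute $\sum_{|i-j|\leq R,\,i\neq j}p_ip_j=O(NRp^2)$ (since $\Pr(X_iX_j=1)=0$ in that range) while the far-range pairs contribute only lower-order renewal corrections. Since $\l=\Theta(Np)$ (by the stationary renewal rate $p/(1+Rp)=\Theta(p)$), we conclude $\l-\Var W=O(Np^2)$, and dividing by $\l$ gives $\l G_2\ton(1-\Var W/\l)\leq(\l-\Var W)/\l=O(p)$.

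The chief obstacle is constructing the monotone coupling $\tilde W_i\leq W-X_i$ across the whole horizon, rather than merely inside the $R$-day repair window; this can be effected by rearranging the underlying geometric inter-arrival times, but the required global negative dependence must be checked carefully from the renewal structure.
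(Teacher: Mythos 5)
Your proof reaches the right bound and is, at bottom, the same argument as the paper's, but you inline what the paper cites: the paper simply notes that the $I_i$ are negatively related, quotes Corollary~2.C.2 of \cite{BHJ92} for the untruncated bound $(1-e^{-\l})(1-\Var W/\l)$ (whose proof supplies the hypothesis of Theorem~\ref{mainthm} with $C=\l-\Var W$), and then applies Theorem~\ref{mainthm} to swap $G_2^{[0,\infty)}=(1-e^{-\l})/\l$ for $G_2\ton$. Your size-bias/monotone-coupling computation $\sum_i p_i\E[W-\tilde W_i]=\l^2-\E[W(W-1)]=\l-\Var W$ is precisely the proof of that corollary, transplanted into the truncated Stein identity. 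This buys self-containedness (you never invoke Theorem~\ref{mainthm}) at the cost of having to dispose of the indicator $\ind_{W<n}$ yourself, and that is where you slip: the claim that $W<n$ almost surely is false in general. With $n=\lfloor N/R\rfloor+1$ one only gets $W\le n$; for instance $R=3$, $N=5$ permits faults on days $1$ and $5$, so $W=2=n$ with probability $p^2>0$. The step is rescued not by the a.s.\ claim but by the observation in the proof of Theorem~\ref{mainthm} that $g_f(n+1)=0$, which makes $g_f(W+1)\ind_{W<n}=g_f(W+1)$ identically on $\Z\ton$; with that substitution your argument closes. Two further remarks: the monotone coupling $\tilde W_i\le W-X_i$ (i.e.\ negative relatedness of the indicators) is asserted rather than constructed both by you and by the paper, so you are on equal footing there; and your order estimate really gives $O(Rp)$, which agrees with the stated $O(p)$ only because $R$ and $N$ are held fixed --- the same convention the paper adopts (``for fixed $n$'').
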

\section{Proofs}
\begin{proof}[Proof of Lemma~\ref{steinfactor}.]
If we set $g_f(i) = h_f(i) - h_f(i-1)$, we can rewrite the Stein identity~\Ref{steinid} in terms of a generator
\begin{align}
\mathcal A\ton h_f(i) = p(r+i)[h_f(i+1) - h_f(i)]\ind_{i < n} + i[h_f(i-1) - h_f(i)].\label{generator}
\end{align}
This is the generator of a birth-death process on non-negative integers with birth and death rates respectively,
\begin{align}
\alpha_i &= 
\begin{cases}
	p(r+i) & i < n\\
	0 & i \geq n
\end{cases},\label{rates}\\
\beta_i &= i,\ \ \  \forall i. \notag 
\end{align}
The stationary distribution associated to the generator~\Ref{generator} can be shown to be $\NB\ton(r,p)$. Let $\pi_i = \Pr(Z\ton = i)$ where $Z\ton \ed \NB\ton(r,p)$. Noting that these rates satisfy condition (C4) in Lemma~2.4 of~\cite{BX01}, we can therefore use Theorem~2.10 from~\cite{BX01}. Hence
\begin{align*}
&\sup_{f \in \mathcal{F}_{TV}} | \Delta g_f (i)| = \frac{\pi_{i+1} + \ldots + \pi_n}{p(r+i)} + \frac{\pi_0 + \ldots + \pi_{i-1}}{i}\\
	&=\frac{1-\pi_0}{p(r+i)} + \sum_{j=0}^{i-1} \left( \frac{\pi_j}{i} - \frac{\pi_{j+1}}{p(r+i)}\right)\\
	&= \frac{1-\pi_0}{pr} + (1-\pi_0) \left( \frac{-i}{pr(r+i)} \right) + \sum_{j=0}^{i-1} \left( \frac{\pi_j}{i} - \frac{\pi_{j+1}}{p(r+i)}\right)\\
	&= \frac{1-\pi_0}{pr} + (\pi_{i+1} + \ldots + \pi_m) \left( \frac{-i}{pr(r+i)} \right) +  \sum_{j=0}^{i-1} \left( \frac{\pi_j}{i} - \pi_{j+1} \left( \frac{1}{p(r+i)} + \frac{i}{pr(r+i)}\right) \right)\\
	&\leq \frac{1-\pi_0}{pr} +   \sum_{j=0}^{i-1} \left( \frac{\pi_j}{i} - \frac{\pi_{j+1}}{pr}  \right)\\
	&= \frac{1-\pi_0}{pr} + \sum_{j=0}^{i-1} \pi_j \left( \frac{1}{i} - \frac{p(r+j)}{(j+1)pr}\right)\\
	&\leq \frac{1-\pi_0}{pr} + \sum_{j=0}^{i-1}\pi_j \left( \frac{1}{j+1} - \frac{(r+j)}{(j+1)r}\right)\\
	&\leq \frac{1-\pi_0}{pr}.
\end{align*}
To show that the bound is sharp, note that it can be shown (see~\cite{BB92}) that 
\[ h_f(i)= -\int_0^\infty\left[ \E f(Z_i\ton(t)) - \NB\ton(r,p)\{ f\}\right] dt,\]
where $Z_i\ton(t)$ is a birth-death process following generator~\Ref{generator} with $Z_i\ton(0) = i$. Let $\tau_{0,1}$ denote the first transition time for the process $Z_0\ton(t)$, then
\begin{align}
h_f(0) &= -  \E \int_0^{\tau_{0,1}}\left[ f(Z_0\ton(t)) - \NB\ton(r,p)\{ f\}\right] dt\notag\\
	&\ \ \ \ \  -  \E \int_{\tau_{0,1}}^\infty \left[f(Z_0\ton(t)) - \NB\ton(r,p)\{ f\}\right] dt. \label{decomp}
\end{align}
Set $f(x) = \ind_{\{0\}}(x)$ and noting the Markov property means the second term of the above is equal to $h_f(1)$, then
\begin{align*}
|g_f(1)| = |h_f(1) - h_f(0)| = \E \tau_{0,1} \cdot  (1 - \pi_0) = \frac{1-\pi_0}{pr}.
\end{align*}
We can assume that $g_f(0) = 0$, and hence this shows that the bound \Ref{factorbound} can be attained and is therefore sharp. To show that the bound is increasing in $n$, simply note that $\pi_0$ is decreasing in $n$.
\end{proof}
%

\begin{proof}[Proof of Theorem~\ref{mainthm}]
In the unconditional setting we need to bound 
\[\E\left[ p(r+W) g(W+1) - W g(W) \right].\]
If $W \leq n$, the only difference with this identity compared to our identity for the truncated negative binomial is $\E[ p(r+W) g(W+1) \ind_{W = n}]$. Therefore it would be sufficient to show that $g(n+1)=0$, use our Stein factor for truncated negative binomial in \Ref{factorbound} and the final conclusion follows from Lemma~\ref{steinfactor}.

Recall that we can set $g_f(i) = h_f(i) - h_f(i-1)$.
Given the transition rates in \Ref{rates}, the process $Z_{n+1}\ton(t)$ can only transition downwards with rate $n+1$. Let $\tau \ed \exp(n+1)$, then in a similar fashion to \Ref{decomp}, using the Markov property
\begin{align*}
h_f(n+1) - h_f(n) = - \E \int_0^\tau \left[ \E f(Z_{n+1}\ton(t)) - \NB\ton(r,p)\{ f\}\right] dt =- \E \tau \cdot (f(n+1) - \pi_{n+1}).
\end{align*}
Since the support of $f$ and $\NB\ton(r,p)$ is $\mathbb{Z}\ton$, $f(n+1) = \pi_{n+1}=0$, and hence $g_f(n+1)=0$.
\end{proof}

\begin{proof}[Proof of Proposition~\ref{example}]
For $i \in \{1, 2, \ldots, N\}$ let $I_i = 1$ if a fault occurs on day $i$, otherwise $I_i=0$, then $W = \sum_{i=1}^N I_i$. It is straightforward to see that the $\{I_i\}$ are negatively related, and Corollary~2.C.2 from~\cite{BHJ92} states that
\[ d_{TV}(\mathcal L(W), \Po(\lambda)) \leq (1-e^{-\lambda})\left( 1 - \frac{\Var \ W}{\l} \right).\]
The result now follows by using Theorem~\ref{mainthm} to swap Stein factor into the truncated setting. 

The exact quantities for $\lambda$ and $\Var \ W$ are simple enough but tedious to calculate for general $N$, $p$ and $R$, so we choose to leave out the explicit formulas and focus on the order of the approximation. To show that the bound is of order $p$, note that for fixed $n$, the probability of one fault is of order $p$ and the probability of two faults is of order $p^2$ and so forth. This implies that $\frac{\Var\ W}{\E W}$ is of order $1 + O(p)$ and this gives the final result.
\end{proof}

\end{document}